\newcommand{\RR}{\ensuremath{\mathbb R}}
\newcommand{\RP}{\ensuremath{\mathbb{R}_+}}
\newcommand{\scal}[2]{\left\langle{#1},{#2}  \right\rangle}
\newcommand{\menge}[2]{\big\{{#1}~\big |~{#2}\big\}}
\newcommand{\mmenge}[2]{\bigg\{{#1}~\bigg |~{#2}\bigg\}}
\definecolor{myblue}{rgb}{.8, .8, 1}
  \newcommand*\mybluebox[1]{%
    \colorbox{structure!7}{\hspace{1em}#1\hspace{1em}}}
\title{Minimal angle spread in the probability simplex\\ with respect to the uniform distribution}
\shorttitle{Minimal angle spread in the probability simplex}
\author{
Heinz H. Bauschke\thanks{%
Mathematics, University
of British Columbia,
Kelowna, B.C. V1V~1V7, Canada.
\email{heinz.bauschke@ubc.ca}, \orcid{0000-0002-4155-9930}.}
\and
Peter A.\,V. DiBerardino\thanks{%
Psychology, University of Waterloo, Waterloo, ON N2L~3G1, Canada.
\email{pavdiber@uwaterloo.ca}, \orcid{0000-0001-7607-9278}.}
}
\begin{document}
%%%%%%%%%%%%%%%%%%%%%%%%%%%%%%%%%%%%%%%%%%%%%%%

\maketitle

\begin{abstract}
We compute the minimal angle spread with respect to
the uniform distribution in the probability
simplex.
The resulting optimization problem is analytically solved.
The formula provided shows that the minimal angle spread approaches zero as the
dimension tends to infinity.
We also discuss an application in cognitive science.
\end{abstract}

\section{Introduction}
\label{sec:intro}

Throughout this paper, we assume that
\begin{empheq}[box=\mybluebox]{equation}
%\begin{equatkion}
    \text{$X := \RR^n$
    with inner product
    $\scal{\cdot}{\cdot}\colon X\times X\to\RR$, }
%    \end{equation}
\end{empheq}
and induced Euclidean norm $\|\cdot\|$.
We also define the probability simplex by
\begin{empheq}[box=\mybluebox]{equation}
%\begin{equation}
\Delta := \Delta_n := \menge{(x_1,\ldots,x_n)\in\RR^n_+}{x_1+x_2+\cdots +x_n=1},
%    \end{equation}
\end{empheq}
where $\RR_+ = \menge{x\in\RR}{x\geq 0}$.
The probability simplex is of central importance in
Statistics, Optimization, and Information Theory; see, e.g.,
\cite{CoverThomas} and \cite{Lange}.
(We write $\Delta_n$ if we wish to emphasize the dimension $n$.)
It will be convenient to set
\begin{empheq}[box=\mybluebox]{equation}
%\begin{equation}
\boldsymbol{1} := (1,1,\ldots,1)\in\RR^n
\quad\text{and}\quad
u := \big(\tfrac{1}{n},\tfrac{1}{n},\ldots,\tfrac{1}{n}\big)\in\Delta_n.
%    \end{equation}
\end{empheq}
% if we need to stress the dimension $n$, we will also write $\boldsymbol{1}_n$ instead of $\boldsymbol{1}$.
The problem we investigate is the following:
Given $p\in\Delta\smallsetminus\{u\}$,
there exist two unique points $a = a(p)$ and $b = b(p)$ in $\Delta$
such that
$\{u,p\}\subseteq [a,b]$ and $\|a-b\|$ is maximal.
(Let us note that when, e.g., $n=3$, then the mapping
$\Delta\smallsetminus\{u\}\colon\RR\colon p\mapsto\|a(p)-b(p)\|$ is continuous; however,
it is not possible to extend it continuously --- let alone in a smooth manner --- at the point $u$.)
The quantity $\|a-b\|$ can be thought of as the
``width`` of $\Delta$ with respect to $p$ and
\begin{equation}
\label{e:cosquot}
\cos\big(\measuredangle (a,b)\big) = \frac{\scal{a}{b}}{\|a\|\|b\|}
\end{equation}
as the cosine of the ``angle spread'' with respect to $p$.

The \emph{aim of this paper} is to \emph{minimize the angle spread},
which equivalently corresponds to \emph{maximizing its cosine}
\begin{equation}
\tag{P}
\label{e:P}
\max_{p\in\Delta\smallsetminus\{u\}}{\frac{\scal{a(p)}{b(p)}}{\|a(p)\|\|b(p)\|}}.
\end{equation}
% {\color{blue} Clearly, the objective function in \cref{e:P} is constant on $\Delta\smallsetminus\{u\}$ intersected with any
% line passing through $u$.}
When $n=2$, it is
clear that the maximum value of \cref{e:P} is $\cos(\pi/2)=0$
and that every $p\in\Delta\smallsetminus\{u\}$ solves \cref{e:P}.
Thus, we assume henceforth that
\begin{empheq}[box=\mybluebox]{equation}
n\in\{3,4,\ldots\}.
\end{empheq}
Our main result can now be stated:
\begin{theorem}
\label{t:main}
The maximum value of \cref{e:P}
is
\begin{equation}
\frac{n-2}{n+2}
\end{equation}
and a pair realizing this maximum is
$a^*=\tfrac{1}{n}(2,1,1,\ldots,1,0)$ and
$b^*=\tfrac{1}{n}(0,1,1,\ldots,1,2)$.
Consequently, the minimal angle spread is
\begin{equation}
\label{e:main}
\arccos\Big(\frac{n-2}{n+2}\Big).
\end{equation}
\end{theorem}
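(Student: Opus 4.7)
The plan is to parametrize the chord through $u$ by its direction vector. For $p \in \Delta \smallsetminus \{u\}$, set $v := p - u$; then $v \neq 0$ and $\langle \boldsymbol{1}, v\rangle = 0$. Writing $\alpha := \max_i v_i > 0$ and $\beta := -\min_i v_i > 0$, the endpoints of the chord of $\Delta$ through $u$ in direction $v$ are
\[
a(p) = u + \tfrac{1}{n\beta}\,v \qquad\text{and}\qquad b(p) = u - \tfrac{1}{n\alpha}\,v,
\]
the scalars being determined by the requirement that some coordinate of $a$ (resp.\ $b$) vanish at the index where $v$ attains its minimum (resp.\ maximum). Using $\langle u, v\rangle = 0$ and writing $\gamma := \|v\|^2$, a direct computation yields the scale-invariant cosine formula
\[
\cos\measuredangle(a,b) = \frac{n\alpha\beta - \gamma}{\sqrt{(n\alpha^2+\gamma)(n\beta^2+\gamma)}}.
\]

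I would then carry out two reductions. First, for fixed $\alpha, \beta$, this cosine (when positive) is strictly decreasing in $\gamma$; so, subject to $\sum v_i = 0$ and the requirement that some $v_i = \alpha$ and some $v_j = -\beta$, the cosine is maximized by minimizing $\gamma = \sum v_i^2$. By strict convexity, this minimum is attained when the remaining $n-2$ coordinates all equal $(\beta-\alpha)/(n-2)$, giving
\[
\gamma_{\min}(\alpha,\beta) = \alpha^2 + \beta^2 + \frac{(\beta-\alpha)^2}{n-2}.
\]
Second, introducing $\sigma := \alpha+\beta$ and $\pi := \alpha\beta$, one verifies the identity $(n\alpha^2+\gamma)(n\beta^2+\gamma) = (n\pi-\gamma)^2 + n\gamma\sigma^2$, so
\[
\cos^2 \measuredangle(a,b) = \bigg(1 + \frac{n\gamma\sigma^2}{(n\pi-\gamma)^2}\bigg)^{-1}.
\]

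Plugging in $\gamma = \gamma_{\min}$ and exploiting scale and $\alpha \leftrightarrow \beta$ symmetry reduces the problem to a single-variable optimization in the scale-invariant ratio $r := \pi/\sigma^2 = \alpha\beta/(\alpha+\beta)^2 \in (0, 1/4]$, where $r = 1/4$ is attained exactly at $\alpha = \beta$. On the positive-cosine subinterval $r \in ((n-1)/n^2, 1/4]$, a short derivative computation shows that the resulting rational function of $r$ is strictly increasing (both positive roots of its derivative, namely $r = (n-1)/n^2$ and $r = (n-1)^2/n^2$, lie outside this open interval whenever $n \geq 3$). Hence the maximum is attained at $r = 1/4$, i.e., $\alpha = \beta$; up to permutation and scaling this forces $v = (1, 0, \ldots, 0, -1)$, recovering the claimed extremal pair $a^*, b^*$ and the value $(n-2)/(n+2)$, whence the minimal angle spread is $\arccos((n-2)/(n+2))$.

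The main obstacle I anticipate is the final single-variable step: the right substitution (the scale-invariant $r = \pi/\sigma^2$) must be identified, and monotonicity of the resulting rational function on the correct interval must be verified. Regions where the cosine is non-positive satisfy the claimed bound trivially, and one must check separately that the Jensen reduction to the minimal-$\gamma$ configuration applies across all admissible $(\alpha, \beta)$.
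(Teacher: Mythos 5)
Your proposal is correct; I verified the key computations. Writing $v=p-u$, $\alpha=\max_i v_i$, $\beta=-\min_i v_i$, $\gamma=\|v\|^2$, your endpoint formulas and the scale-invariant expression $\cos\measuredangle(a,b)=(n\alpha\beta-\gamma)\big/\sqrt{(n\alpha^2+\gamma)(n\beta^2+\gamma)}$ follow from $\scal{u}{v}=0$ and $\|u\|^2=1/n$; the algebraic identity $(n\alpha^2+\gamma)(n\beta^2+\gamma)=(n\alpha\beta-\gamma)^2+n\gamma(\alpha+\beta)^2$ checks out; and after inserting $\gamma_{\min}=\alpha^2+\beta^2+(\beta-\alpha)^2/(n-2)$ one gets $n\gamma_{\min}\sigma^2/(n\pi-\gamma_{\min})^2 = n(n-2)\big((n-1)-2nr\big)\big/\big(n^2r-(n-1)\big)^2$ with $r=\pi/\sigma^2$, whose derivative is proportional to $\big(n^2r-(n-1)\big)\big(n^2r-(n-1)^2\big)$ and hence negative on $\big((n-1)/n^2,\,1/4\big]$ because $(n-1)^2>n^2/4$ for $n\geq 3$; so the squared cosine is strictly increasing in $r$, maximized at $r=1/4$, i.e.\ $\alpha=\beta$, with value $(n-2)^2/(n+2)^2$, exactly as claimed. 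Your route overlaps with the paper's in the middle but diverges meaningfully at both ends. Your first reduction is \cref{l:210511a} and \cref{c:210511b} in disguise (equalize the middle $n-2$ coordinates by convexity), though your formulation is arguably more transparent: you observe that the quotient depends on the middle block only through $\gamma$ and is monotone in it, whereas the paper presents the same fact as a ``lucky break'' in which the numerator and denominator happen to be optimized by the same $z$. The endgame is genuinely different: the paper exploits constancy of the objective along chords through $u$ to normalize $p_1=0$ and then does calculus on a one-variable function $Q(y)$ on $[1/(n-1),1]$ whose derivative has six roots (verified there by computer algebra), while you preserve the $\alpha\leftrightarrow\beta$ symmetry and scale invariance, reduce to the symmetric invariant $r=\alpha\beta/(\alpha+\beta)^2$, and need only a two-root derivative computation --- which moreover explains a priori why the optimizer is the symmetric chord $v\propto(1,0,\ldots,0,-1)$, i.e.\ the pair $a^*,b^*$. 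Two small points to nail down in a final write-up, both of which do hold: feasibility of the Jensen point, namely $(\beta-\alpha)/(n-2)\in[-\beta,\alpha]$, follows because $\sum_i v_i=0$ forces $\beta\leq(n-1)\alpha$ and $\alpha\leq(n-1)\beta$ (the check you flagged); and the feasible range of $r$ under these constraints is actually $\big[(n-1)/n^2,\,1/4\big]$ rather than all of $(0,1/4]$ --- at the left endpoint the cosine vanishes, matching the paper's $Q(1/(n-1))=0=Q(1)$ --- which only helps, since your analysis lives on the positive-cosine subinterval and the non-positive-cosine region is dominated by $(n-2)/(n+2)>0$ anyway.
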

See \cref{figure} for an illustration of \cref{t:main} when $n=3$.
We note that as the dimension of the space $n$ increases to infinity,
the minimal angle spread approaches $\arccos(1)=0$.
We also note that when $n=1,2,4$, then
$\pi-\arccos((n-2)/(n+2))$ coincides with the dihedral angle of
the tetrahedron, cube, octahedron, respectively. However, dihedral
angles of other classical polyhedra (see \cite{dihedral}) do not seem to be
related to the angles provided by \cref{e:main}.

Our strategy to prove \cref{t:main} is to reduce the complexity of the problem
in stages by exploiting its structure.
Eventually, we are led to a one-dimensional problem
which we then solve by Calculus.

\begin{figure}
\centering
\includegraphics[scale = 0.3]{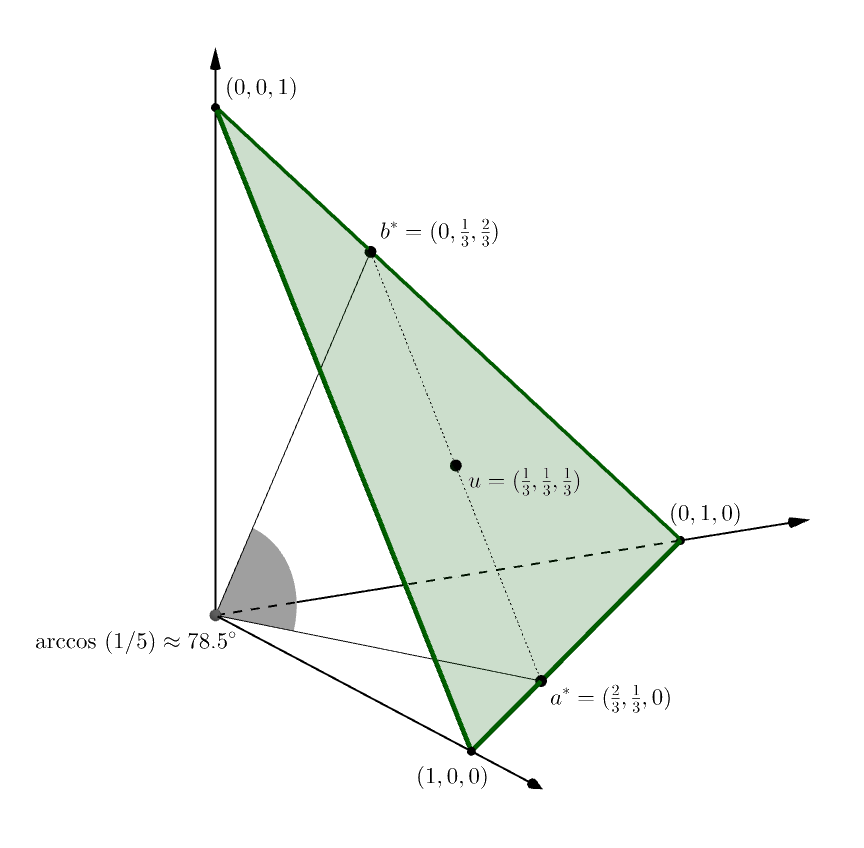}
\caption{An illustration of \cref{t:main} when $n=3$.} \label{figure}
\end{figure}

The remainder of this paper is organized as follows.
In \cref{sec:aux}, we collect a technical optimization result that will
be used later. The computation of $[a,b]$ is carried out in \cref{sec:ab}.
In \cref{sec:quot}, we set up the cosine quotient \cref{e:cosquot} in a
more tractable form. The optimization is then tackled in
\cref{sec:max1} where we keep $p_1$ and $p_n$ fixed.
At last, the proof is completed in \cref{sec:max2}.
In \cref{sec:app}, we sketch an application of our results
--- which in fact motivated this note --- in cognitive science.
The presented results are a first contribution to using the minimal
angle spread as a nonsmooth discrepancy term in this area.

Finally, we note that a reviewer pointed out that another possible departure point
is to study sufficient conditions for strong convexity of integral functionals
in two-stage stochastic linear programming. In particular, recent work by
Claus and Sp\"urkel \cite{ClSp} features minimal angles of similar though different kind.
This is a promising direction to explore in future research.

The notation in this paper is fairly standard and follows largely \cite{BC2017}.

\section{An auxiliary result}

\label{sec:aux}

It will be convenient to have the following result ready for future use.

\begin{lemma}
\label{l:210511a}
Let $m\in\{1,2,\ldots\}$, set $\boldsymbol{1} := (1,1,\ldots,1)\in\RR^m$, and let $\gamma\in\RR$.
Set $h\colon\RR^m\to\RP\colon x\mapsto \|x-\gamma \boldsymbol{1}\|^2$,
let $x\in\RR^m$, and set
\begin{equation*}
y := (\eta,\eta,\ldots,\eta)\in\RR^m,
\quad\text{where}\quad
\eta := \tfrac{1}{m}(x_1+x_2+\cdots+x_m).
\end{equation*}
Then
\begin{equation*}
h(y) \leq h(x).
\end{equation*}
\end{lemma}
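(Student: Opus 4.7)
The plan is to recognize $y$ as the orthogonal projection of $x$ onto the one-dimensional subspace $\RR\boldsymbol{1}$, and then to invoke the Pythagorean theorem. Concretely, I would first observe that $y-\gamma\boldsymbol{1}=(\eta-\gamma)\boldsymbol{1}$ lies in $\RR\boldsymbol{1}$, while the residual $x-y=(x_1-\eta,\ldots,x_m-\eta)$ is orthogonal to $\boldsymbol{1}$, because $\scal{x-y}{\boldsymbol{1}}=\sum_{i=1}^m(x_i-\eta)=0$ by the very definition of $\eta$ as the coordinate mean.

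Next, I would write the decomposition
\begin{equation*}
x-\gamma\boldsymbol{1}=(x-y)+(y-\gamma\boldsymbol{1}),
\end{equation*}
which by the orthogonality just noted is an orthogonal sum. Applying Pythagoras then yields
\begin{equation*}
h(x)=\|x-\gamma\boldsymbol{1}\|^2=\|x-y\|^2+\|y-\gamma\boldsymbol{1}\|^2\geq\|y-\gamma\boldsymbol{1}\|^2=h(y),
\end{equation*}
which is the desired inequality.

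If a purely computational alternative is preferred (avoiding the geometric language), the same identity drops out of the expansion
\begin{equation*}
h(x)=\sum_{i=1}^m(x_i-\gamma)^2=\sum_{i=1}^m\bigl((x_i-\eta)+(\eta-\gamma)\bigr)^2,
\end{equation*}
where the cross term vanishes because $\sum_i(x_i-\eta)=0$, leaving $h(x)=\sum_i(x_i-\eta)^2+m(\eta-\gamma)^2=\sum_i(x_i-\eta)^2+h(y)$. There is no real obstacle here; the only point that must be made carefully is the cancellation $\sum_i(x_i-\eta)=0$, on which the entire argument rests and which is immediate from the definition of $\eta$.
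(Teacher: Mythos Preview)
Your argument is correct. Both variants---the projection/Pythagoras version and the coordinate expansion with the vanishing cross term---are complete and rigorous; the only delicate point is indeed $\sum_i(x_i-\eta)=0$, which you state and use correctly.

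The paper's proof takes a slightly different route: it expands $h(y)-\gamma^2\|\boldsymbol{1}\|^2=m\eta^2-2\gamma m\eta$ and $h(x)-\gamma^2\|\boldsymbol{1}\|^2=\sum_i x_i^2-2\gamma\sum_i x_i$, notes that the linear terms agree, and then appeals to the convexity of $t\mapsto t^2$ (Jensen) to obtain $m\eta^2\leq\sum_i x_i^2$. Your approach instead produces the exact identity $h(x)=h(y)+\|x-y\|^2$, which is a bit sharper: it identifies the gap $h(x)-h(y)$ as $\sum_i(x_i-\eta)^2$ and in particular shows that equality holds precisely when all coordinates of $x$ coincide. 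Conceptually, the paper's Jensen step and your Pythagoras step encode the same variance identity $\sum_i x_i^2 - m\eta^2=\sum_i(x_i-\eta)^2\geq 0$, so the two proofs are close cousins; yours just makes the geometric reason (orthogonal projection onto $\RR\boldsymbol{1}$) explicit.
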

\begin{proof}
Indeed, we have
\begin{align*}
h(y) - \gamma^2\|\boldsymbol{1}\|^2
&=
\|y\|^2 - 2\gamma\scal{y}{\boldsymbol{1}}
=
m\eta^2 - 2\gamma m\eta\\
&= m\big(\tfrac{1}{m}(x_1+x_2+\cdots+x_m) \big)^2
- 2\gamma(x_1+x_2+\cdots+x_m)\\
&\leq
m \tfrac{1}{m}\big(x_1^2+x_2^2+\cdots+x_m^2\big) - 2\gamma\scal{x}{\boldsymbol{1}}\\
&=\|x\|^2 - 2\gamma\scal{x}{\boldsymbol{1}}\\
&= h(x)- \gamma^2\|\boldsymbol{1}\|^2,
\end{align*}
where the inequality follows from  the convexity of the square function.
\end{proof}

\begin{corollary}
\label{c:210511b}
Let $m\in\{1,2,\ldots\}$,
and let $\gamma\in\RR$.
Set
$A\colon \RR^m\to\RR^m\colon x \mapsto (\eta,\eta,\ldots,\eta)$,
where $\eta = \tfrac{1}{m}(x_1+x_2+\cdots+x_m)$, and
let $C$ be a nonempty subset of $\RR^m$ such that
$A(C)\subseteq C$.
Set
$\boldsymbol{1} := (1,1,\ldots,1)\in\RR^m$.
Then to
\begin{equation}
\text{minimize}\;\;
\|x-\gamma\boldsymbol{1}\|^2
\;\;\text{over $x\in C$}
\end{equation}
is the same as to
\begin{equation}
\text{minimize}\;\;
\|x-\gamma\boldsymbol{1}\|^2
\;\;\text{over $x\in C\cap \RR\,\boldsymbol{1}$}
\end{equation}
in the sense that the optimal values for both problems
are identical and if the first problem has a solution, then
it also has a solution that solves also the second problem.
\end{corollary}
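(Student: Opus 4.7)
The plan is to leverage the two key properties of the averaging operator $A$: first, by construction, $A(x)=\eta\boldsymbol{1}\in\RR\,\boldsymbol{1}$ for every $x$, and second, by the hypothesis, $A(C)\subseteq C$. Together these mean that $A$ sends any candidate in $C$ to a candidate in the restricted feasible set $C\cap\RR\,\boldsymbol{1}$. Combined with the variance-reduction inequality $h(A(x))\leq h(x)$ supplied by \cref{l:210511a}, the reduction is automatic.

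More concretely, I would write $v_1:=\inf_{x\in C}\|x-\gamma\boldsymbol{1}\|^2$ and $v_2:=\inf_{x\in C\cap\RR\boldsymbol{1}}\|x-\gamma\boldsymbol{1}\|^2$. The inclusion $C\cap\RR\,\boldsymbol{1}\subseteq C$ gives $v_1\leq v_2$ immediately. For the reverse inequality, I would fix an arbitrary $x\in C$, note that $A(x)\in C$ (by the assumption $A(C)\subseteq C$) and $A(x)\in\RR\,\boldsymbol{1}$ (by the definition of $A$), hence $A(x)\in C\cap\RR\,\boldsymbol{1}$, and then invoke \cref{l:210511a} to deduce $\|A(x)-\gamma\boldsymbol{1}\|^2\leq\|x-\gamma\boldsymbol{1}\|^2$. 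Taking the infimum over $x\in C$ on the right-hand side yields $v_2\leq v_1$, so $v_1=v_2$.

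For the solution claim, suppose $x^{*}\in C$ attains $v_1$. Then $A(x^{*})\in C\cap\RR\,\boldsymbol{1}$ and
\begin{equation*}
v_2\;\leq\;\|A(x^{*})-\gamma\boldsymbol{1}\|^2\;\leq\;\|x^{*}-\gamma\boldsymbol{1}\|^2\;=\;v_1\;=\;v_2,
\end{equation*}
so equality holds throughout, meaning $A(x^{*})$ is simultaneously optimal for both problems. There is really no serious obstacle here; the only point requiring a moment's attention is to verify, with each use, that $A(x)$ genuinely belongs to $C\cap\RR\,\boldsymbol{1}$, which is exactly why the invariance hypothesis $A(C)\subseteq C$ has been built into the statement.
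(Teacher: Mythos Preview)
Your argument is correct and is exactly the approach the paper intends: the paper's own proof is the single line ``Clear from \cref{l:210511a},'' and your write-up is simply a careful unpacking of that claim via the averaging operator $A$ and the inequality $h(A(x))\leq h(x)$.
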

\begin{proof}
Clear from \cref{l:210511a}.
\end{proof}

\begin{remark}
We point out in passing that the operator $A$ from
\cref{c:210511b} is the projection operator of the set $\RR\boldsymbol{1}$.
\end{remark}

\section{Determining the relative boundary points $\scriptstyle a$ and $\scriptstyle b$}

\label{sec:ab}

In this section, we fix
\begin{equation}
\label{e:pass1}
p\in \Delta\smallsetminus\{u\}.
\end{equation}
Then there exist indices $i$ and $j$ in $I := \{1,2,\ldots,n\}$ such that
$p_i<\tfrac{1}{n}<p_j$.
Without loss of generality, we assume that
\begin{equation}
\label{e:pass2}
0 \leq p_1 = \min_{i\in I} p_i < \tfrac{1}{n} < p_n = \max_{i\in I} p_i \leq 1.
\end{equation}
Set
\begin{equation}
(\forall \lambda\in\RR)\quad
q(\lambda) := (1-\lambda)u+\lambda p = u + \lambda(p-u).
\end{equation}
Note that
$q(0)=u$, $q(1)=p$,
\begin{equation}
(\forall \lambda\in\RR)(\forall i\in I)\quad
q_i(\lambda) = (1-\lambda)u_i+\lambda p_i = u_i + \lambda(p_i-u_i),
\end{equation}
and hence
\begin{equation}
q_1(\lambda)+q_2(\lambda)+\cdots + q_n(\lambda) = 1.
\end{equation}
We wish to find the smallest and largest $\lambda\in\RR$ such that
$q(\lambda)\in\Delta$.
Let $i\in I$.
Suppose that $p_i > u_i$.
Then $(\forall \lambda \in\mathbb{R}_+)$ $q_i(\lambda)>u_i$.
So if $\lambda < 0$, then
$q_i(\lambda) = 0$
$\Leftrightarrow$
$-u_i = \lambda(p_i-u_i)$
$\Leftrightarrow$
$\lambda = -u_i/(p_i-u_i) = -(1/n)/(p_i-(1/n))=-1/(np_i-1)\leq -1/(np_n-1)$.
Hence the smallest value $\lambda$ still guaranteeing $q(\lambda)\in\Delta$ is
\begin{equation}
\lambda_- := \frac{-1}{np_n-1}= \frac{1}{1-np_n} < 0.
\end{equation}
Analogously, the largest value $\lambda$ still guaranteeing $q(\lambda)\in\Delta$ is
\begin{equation}
\lambda_+ := \frac{1}{1-np_1} > 0.
\end{equation}
The corresponding vectors
\begin{equation}
a := q(\lambda_-) = u + \lambda_-(p-u)
\end{equation}
and
\begin{equation}
b := q(\lambda_+) = u + \lambda_+(p-u)
\end{equation}
thus form the largest segment $[a,b]\subseteq \Delta$
such that $\{u,p\}\subseteq [a,b]$.
Note that $a$ and $b$ are depending on $p$ --- when we want
to stress this, then we'll write $a(p)$ and $b(p)$.

For $i\in I$, we simplify
\begin{equation}
\label{e:ai}
a_i
= u_i+\lambda_-(p_i-u_i)
= \tfrac{1}{n}+\frac{1}{1-np_n}\big(p_i-\tfrac{1}{n}\big)
= \frac{1-np_n+n\big(p_i-\tfrac{1}{n}\big)}{n(1-np_n)}
= \frac{p_n-p_i}{np_n-1}
\end{equation}
and similarly
\begin{equation}
\label{e:bi}
b_i = \frac{p_i-p_1}{1-np_1}.
\end{equation}
Thus
\begin{equation}
\label{e:220203a}
a = \frac{1}{np_n-1}(p_n\boldsymbol{1}-p)
\;\;\text{and}\;\;
b = \frac{1}{1-np_1}(p-p_1\boldsymbol{1}),
\end{equation}
where $\boldsymbol{1}=(1,1,\ldots,1)\in\RR^n$.
Next,
\begin{subequations}
\label{e:norma^2}
\begin{align}
\|a\|^2
&=
\frac{1}{(np_n-1)^2}\|p_n\boldsymbol{1}-p\|^2
=
\frac{1}{(np_n-1)^2}\big(p_n^2\|\boldsymbol{1}\|^2-
2p_n\scal{\boldsymbol{1}}{p}+\|p\|^2\big)\\
&=
\frac{1}{(np_n-1)^2}\big(np_n^2-2p_n+\|p\|^2\big)\\
&=
\frac{1}{(np_n-1)^2}\Big(p_1^2+\big(p_2^2+\cdots + p_{n-1}^2 \big)+(n+1)p_n^2-2p_n\Big)
\end{align}
\end{subequations}
and similarly
\begin{equation}
\label{e:normb^2}
\|b\|^2 =
\frac{1}{(1-np_1)^2}\Big((n+1)p_1^2-2p_1+\big(p_2^2+\cdots + p_{n-1}^2 \big)+p_n^2\Big).
\end{equation}

\section{Setting up the cosine quotient}
\label{sec:quot}

We uphold the assumptions and notation from the previous section.
It is convenient to abbreviate
\begin{equation}
z := (p_2,\ldots,p_{n-1})\in\RR^{n-2}.
\end{equation}
Note that
\begin{equation}
\label{e:preC}
(\forall i\in\{1,2,\ldots,n-2\})
\;\; p_1 \leq z_i \leq p_n
\quad\text{and}\quad
\sum_{i=1}^{n-2}z_i = 1- p_1-p_n \geq 0
\end{equation}
because of \cref{e:pass1} and \cref{e:pass2}.
This allows us to rewrite \cref{e:norma^2} and \cref{e:normb^2} more
succinctly as
\begin{equation}
\label{e:nicenorma^2}
\|a\|^2 = \frac{1}{(np_n-1)^2}\big(p_1^2+\|z\|^2 +(n+1)p_n^2-2p_n\big)
\end{equation}
and
\begin{equation}
\label{e:nicenormb^2}
\|b\|^2 =
\frac{1}{(1-np_1)^2}\big((n+1)p_1^2-2p_1+\|z\|^2+p_n^2\big).
\end{equation}
Next, using \cref{e:ai} and \cref{e:bi}, we have
\begin{subequations}
\label{e:niceab}
\begin{align}
\scal{a}{b}
&= \sum_{i=1}^{n} a_ib_i
=
\sum_{i=1}^n \frac{p_n-p_i}{np_n-1}\cdot\frac{p_i-p_1}{1-np_1}
\\
&=
\frac{1}{(np_n-1)(1-np_1)}
\sum_{i=2}^{n-1} (p_n-p_i)(p_i-p_1)
\\
&=
\frac{1}{(np_n-1)(1-np_1)}
\sum_{i=1}^{n-2} (p_n-z_i)(z_i-p_1).
\end{align}
\end{subequations}
Using \cref{e:niceab}, \cref{e:nicenorma^2}, and
\cref{e:nicenormb^2},
we now set up the quotient of interest from \cref{e:cosquot}:
\begin{equation}
\label{e:cosquot2}
\frac{\scal{a}{b}}{\|a\|\|b\|}
=
\frac{\displaystyle\sum_{i=1}^{n-2} (p_n-z_i)(z_i-p_1)}{\sqrt{p_1^2+\|z\|^2 +(n+1)p_n^2-2p_n}\sqrt{(n+1)p_1^2-2p_1+\|z\|^2+p_n^2}}.
\end{equation}

\section{Maximizing the cosine quotient (with $\scriptstyle p_1$ and $\scriptstyle p_n$ fixed)}

\label{sec:max1}

We uphold the notation of the previous section.
Now we turn toward maximizing the cosine quotient \cref{e:cosquot2},
with $p_1$ and $p_n$ fixed.
In view of \cref{e:preC}, $z=(p_2,\ldots,p_{n-1})$ must belong to the
compact convex set
\begin{equation}
C := \mmenge{w\in[p_1,p_n]^{n-2}}{\sum_{i=1}^{n-2}w_i = 1- p_1-p_n}.
\end{equation}
The continuity of \cref{e:cosquot2} as a function of $z$
coupled with the compactness of $C$ guarantees the existence of
a maximizer of the cosine quotient.
Note that
\begin{equation}
A(C)\subseteq C,
\end{equation}
where $A\colon\RR^{n-2}\to\RR^{n-2}\colon
w\mapsto (\eta,\eta,\ldots,\eta)\in\RR^{n-2}$
and $\eta = \tfrac{1}{n-2}(w_1+w_2+\cdots+w_{n-2})$.

In general, maximizing a quotient is more involved;
however, we will get a lucky break for our problem:
It turns out we can maximize the numerator and minimize the denominator
of \cref{e:cosquot2} with respect to $z$ and we luckily obtain
the same optimal vector!

For convenience, set
$\alpha := p_1$, $\beta := p_n$, $\gamma := (\alpha+\beta)/2$,
and $\boldsymbol{1} := (1,1,\ldots,1)\in \RR^{n-2}$.
Then the numerator of \cref{e:cosquot2} --- as a function of $z$ --- is
\begin{subequations}
\begin{align}
\sum_{i=1}^{n-2} (\beta-z_i)(z_i-\alpha)
&=
-\sum_{i=1}^{n-2} z_i^2 + (\alpha+\beta)\sum_{i=1}^{n-2} z_i
+ \text{constant}\\
&=
-\|z\|^2 + (\alpha+\beta)\scal{\boldsymbol{1}}{z}
+ \text{constant}\\
&= -\|z-\gamma\boldsymbol{1}\|^2
+ \text{constant}.
\end{align}
\end{subequations}
We want to maximize the numerator over $z\in C$;
equivalently, we want to minimize $\|z-\gamma\boldsymbol{1}\|^2$
over $z\in C$.
By \cref{c:210511b}, we may restrict our attention
to
\begin{equation}
\label{e:210512c}
z \in C\cap \RR\boldsymbol{1}.
\end{equation}
On to the denominator of \cref{e:cosquot2}!
It is clear that the denominator becomes small when $\|z\|$ becomes small.
So minimizing the denominator corresponds to
minimizing $\|z\|^2 = \|z-0\boldsymbol{1}\|^2$.
Again by \cref{c:210511b}, we may restrict our attention
to \cref{e:210512c}!
But if $z\in C\cap\RR\boldsymbol{1}$, say $z=\zeta\boldsymbol{1}$
for $\zeta\in\RR$, then
the requirement that $\zeta\boldsymbol{1}\in C$ forces
$\sum_{i=1}^{n-2}z_i = (n-2)\zeta = 1-p_1-p_n$, i.e.,
$\zeta = (1-p_1-p_n)/(n-2)$.
Because $(n-1)p_1+p_n\leq 1 \leq p_1+(n-1)p_n$, it follows that
$p_1\leq \zeta \leq p_n$.
Altogether,
\begin{equation}
\label{e:210512z}
z = \frac{1-p_1-p_n}{n-2}(1,1,\ldots,1)\in C\cap\RR\boldsymbol{1} \subseteq \RR^{n-2}
\end{equation} maximizes \cref{e:cosquot2}
and
\begin{equation}
\label{e:210512z^2}
\|z\|^2 = \frac{(1-p_1-p_n)^2}{n-2}.
\end{equation}
To sum up, when restricted to the one-dimensional slice $C\cap\RR\boldsymbol{1}$ (see \cref{e:210512c}),
we obtain the unique solution  $z$ given by \cref{e:210512z}.
Our next step is to plug \cref{e:210512z} and \cref{e:210512z^2}
back into \cref{e:cosquot2}.
We start with the numerator of \cref{e:cosquot2}:
\begin{subequations}
\label{e:pfizer1}
\begin{align}
\scal{a}{b}
&=
\sum_{i=1}^{n-2}
\Big(p_n-\frac{1-p_1-p_n}{n-2}\Big)\Big(\frac{1-p_1-p_n}{n-2} -p_1\Big)\\
&=
(n-2)\Big(\frac{(n-2)p_n - (1-p_1-p_n)}{n-2}\Big)
\Big(\frac{1-p_1-p_n-(n-2)p_1}{n-2}\Big)\\
&=
\frac{1}{n-2}\big(p_1+(n-1)p_n-1\big)\big(1-p_n-(n-1)p_1\big).
\end{align}
\end{subequations}
Using \cref{e:210512z^2}, we see that the square of the denominator of \cref{e:cosquot2}
is
\begin{subequations}
\label{e:pfizer2}
\begin{align}
\|a\|^2\|b\|^2
&= \bigg(p_1^2 + \frac{(1-p_1-p_n)^2}{n-2} + (n+1)p_n^2 - 2p_n\bigg)\\
&\quad \times \bigg((n+1)p_1^2 - 2p_1 + \frac{(1-p_1-p_n)^2}{n-2} + p_n^2 \bigg)\\
&= \frac{1}{(n-2)^2}\\
&\quad \times \big((n-2)p_1^2 + (1-p_1-p_n)^2 + (n-2)(n+1)p_n^2 - 2(n-2)p_n \big)\\
& \quad \times \big((n-2)(n+1)p_1^2 - 2(n-2)p_1 + (1-p_1-p_n)^2 +
(n-2)p_n^2 \big).
\end{align}
\end{subequations}
Abbreviating $x=p_1$ and $y=p_n$,
we use \cref{e:pfizer1} and \cref{e:pfizer2} to write the square of \cref{e:cosquot2} as
\label{e:Mess}
\begin{align}
\frac{\scal{a}{b}^2}{\|a\|^2\|b\|^2}
&= \frac{\frac{\big(x+(n-1)y-1\big)^2\big(1-y-(n-1)x\big)^2}
{\big((n-2)x^2 + (1-x-y)^2 + (n-2)(n+1)y^2-2(n-2)y \big)}}
{\big((n-2)(n+1)x^2 -2(n-2)x + (1-x-y)^2 + (n-2)y^2 \big)}.
\end{align}

In the next section, we will do the final maximization by
setting $p_1$ and $p_n$, i.e., $x$ and $y$, loose.

\section{Maximizing the cosine quotient (concluded)}

\label{sec:max2}

While \cref{e:Mess} looks like a huge mess, we can invoke one small
but useful optimization: the objective function value of the
cosine quotient is by construction constant on $[a,b]$.
Now both $a$ and $b$ have at least one coordinate equal to $0$,
namely $a_n=0$ and $b_1=0$ (see \cref{e:220203a}).
Because the objective function in \cref{e:P} is constant on
$\Delta\smallsetminus\{u\}$ intersected with any line passing
through $u$,
we may and do finally assume that $a=p$ and thus $p_1=x=0$.
Then \cref{e:Mess} simplifies to
%\begin{subequations}
\begin{equation}
\label{e:mess}
\frac{\scal{a}{b}^2}{\|a\|^2\|b\|^2}
= \frac{\big((n-1)y-1\big)^2\big(1-y\big)^2}
{\big((1-y)^2 + (n-2)(n+1)y^2-2(n-2)y \big)
{\big((1-y)^2 + (n-2)y^2 \big)}
}
=: Q(y).
\end{equation}
%\end{subequations}
The constraints on $p$ are
$0 \leq p_1 \leq p_2 = \cdots = p_{n-1} = (1-p_1-p_n)/(n-2) \leq p_n\leq 1$.
With our assumption that
$p_1=0=x$ and $p_n=y$, these simplify to
$0 \leq (1-y)/(n-2)\leq y \leq 1$ and then to
\begin{equation}
\label{e:ycons}
\frac{1}{n-1}\leq y\leq 1.
\end{equation}
Because $n\geq 3>2$, we have
\begin{equation}
\frac{1}{n-1}< \frac{2}{n} < 1.
\end{equation}
So our remaining goal is to
\begin{equation}
\label{e:lastmax}
\text{maximize\;\;}
Q(y)
\;\;\text{subject to}
\;\;
\frac{1}{n-1}\leq y\leq 1
\end{equation}
where $Q(y)$ is defined in \cref{e:mess}
Using the chain quotient rule to compute
the derivative of $Q(y)$ followed by
factoring yields
\begin{align}
Q'(y)
&=
\frac{2(n-2)\big((n-1)^2y^2-ny+1\big)\big((n-1)y-1\big)(ny-2)(y-1)y}
{\big((n^2-n-1)y^2 + 2(1-n)y +1\big)^2 \big((n-1)y^2 -2y+1 \big)^2}.
\end{align}
Note that $Q'(y)$ has six roots, namely
\begin{equation}
\bigg\{\frac{n\pm\sqrt{-(3n-2)(n-2)}}{2(n-1)^2}, 0, \frac{1}{n-1}, \frac{2}{n},1\bigg\}.
\end{equation}
Because $n>2$ implies $-(3n-2)(n-2)<0$,
we note that there are exactly four real roots.
Our constraint \cref{e:ycons} excludes $0$.
The remaining three roots include the endpoints of
interval constraint.
Thus, the maximum value of \cref{e:lastmax} is found
by substituting for $y$ the values ${1}/(n-1),{2}/{n},1$ into
$Q(y)$ which results in $0,(n-2)^2/(n+2)^2,0$.

To sum up, $y=\frac{2}{n}$ is the solution of \cref{e:lastmax},
with maximum value $(n-2)^2/(n+2)^2$.
Using \cref{e:210512z}, we see this gives rise
to the probability distribution
\begin{equation}
\label{e:p*}
p^* = \tfrac{1}{n}\big(0,1,1,\ldots,1,2\big).
\end{equation}
We have shown that the angle spread is minimized
for $p^*$ given by \cref{e:p*}.
In view of \cref{e:ai} and \cref{e:bi},
this gives rise to
\begin{equation}
a^* := a(p^*) = \tfrac{1}{n}\big(2,1,1,\ldots,1,0\big)
\;\;\text{and}\;\;
b^* := b(p^*) = \tfrac{1}{n}\big(0,1,1,\ldots,1,2\big) = p^*.
\end{equation}
These vectors give the optimal value
\begin{equation}
\label{e:theend}
\cos\big(\measuredangle (a^*,b^*)\big) = \frac{\scal{a^*}{b^*}}
{\|a^*\|\|b^*\|} = \frac{n-2}{n+2}
\end{equation}
which we claimed in the introduction.
The proof of \cref{t:main} is thus complete.

\begin{remark}
We point out that the expression for the optimal value
in \cref{e:theend} was discovered numerically using
{\texttt{Julia}} \cite{Julia}. The computations in this section were verified with
{\texttt{SageMath}} \cite{Sage}. Finally, \cref{figure} was created using
{\texttt{Geogebra}} \cite{Geogebra}.
\end{remark}

\section{An application in cognitive science}

\label{sec:app}

Our main result (\cref{t:main}) provides an important theoretical and methodological advancement in the study of probabilistic belief models in humans. Human learning is often modeled with Bayesian statistics, where beliefs are represented as probability distributions over possible outcomes \cite{tradeoff, optimal, approxbayes, growmind}.

\subsection*{A motivating example}
Consider the expected returns on a \$100 investment over 1 year. A ``bull'' investor believes market prices will rise, while a ``bear'' investor believes market prices will fall. However, a bull still surely accepts prices \emph{could} fall, despite holding this belief to a lesser degree than the bear. In Bayesian terms, the beliefs of our two investors can be represented as probability distributions over the domain of potential investment returns. The bull's ``prior'' (initial belief) will have most of its probability mass over positive returns, while the opposite is true for the bear's prior.
These priors change over time in light of new evidence. While it is theoretically optimal to update according to the laws of probability, the empirical question remains as to how people actually represent and update probabilistic beliefs.

\subsection*{Previous work}
Much of the experimental work in this field infers prior and posterior beliefs from sequential participant actions, or elicits them with insufficient detail \cite{elicit, statelicit, expectations}. This requires experimenters to make unwarranted assumptions about the way that probabilistic beliefs are parameterized as probability distributions, and the nature of how they are updated with new data. It may be that a strict Bayesian model of human cognition is inappropriate when these assumptions are relaxed. Recent work by DiBerardino, Filipowicz, Danckert, and Anderson
used a computerized version of the game ``Plinko'', where at each trial a ball falls through an array of pegs into one of $n=40$ slots below.
Participants were tasked with estimating the distribution of future ball drops by explicitly drawing a probability distribution that represents their beliefs after each trial \cite{plinko}. This work found that initial beliefs (priors) vary across individuals and that these differences indicated future ball drop learning accuracy, thus demonstrating the importance of directly measuring individual beliefs in a theoretically agnostic manner \cite{plinko}.

However, \cite{plinko} cannot make any additional claims about \emph{why} participants with some priors appear to learn better than others. Its main limitation is that some participants' priors are more similar to the forthcoming ball drop distribution than others. As a result, it cannot be determined whether or not the observed differences in learning ability across different priors are due to participant ``state'' or ``trait'' differences. It may be that some participants are better probabilistic learners, which is a trait that can be detected by the shape of their prior beliefs. But it may also be that the participants who performed the best only did so because they just happened to have an initial state of belief more amenable to learning the forthcoming distribution.

\subsection*{On-going work}

Our main result (\cref{t:main}) allows for a new Plinko experiment that presents each participant with a ball drop distribution of a set level of similarity to any arbitrary participant's prior. Specifically, when measuring similarity between discrete probability distributions as the angular spread between representative Euclidean vectors, as was done in \cite{plinko}, we can determine the maximum possible rotation (dissimilarity) for any given vector (participant's prior) with respect to the uniform distribution. We require the following result. %This rotated vector now defines the probability distribution that participants will be tasked to learn.

\begin{corollary}
\label{c:app}
For every $p\in\Delta_n\smallsetminus\{u\}$
there exist $q$ and $v$ in $\Delta_n \cap (u+\RR(p-u))$
such that
\begin{equation}
\measuredangle (p,q) = \frac{1}{2}\arccos\Big(\frac{n-2}{n+2}\Big)
= \measuredangle(u,v).
\end{equation}
\end{corollary}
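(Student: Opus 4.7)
The plan is to exploit a key geometric feature of the line $L := u + \RR(p-u)$: the foot of the perpendicular from the origin onto $L$ is precisely $u$. Indeed, since $p\in\Delta_n$, we have $\scal{u}{p-u} = \scal{u}{p} - \|u\|^2 = \tfrac{1}{n} - \tfrac{1}{n} = 0$, so the triangle with vertices $0$, $u$, $x$ is right-angled at $u$ for every $x\in L$. Consequently, $L$ --- and in particular the segment $\Delta_n\cap L = [a(p),b(p)]$ already computed in \cref{sec:ab} --- lies in the $2$-plane through the origin spanned by $u$ and $p-u$.

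Working inside this plane, for $x = u+\lambda(p-u)\in[a(p),b(p)]$ I define the signed angle $\theta(x) := \mathrm{sign}(\lambda)\,\measuredangle(u,x)$. The right angle at $u$ yields $\tan\theta(x) = \lambda\|p-u\|/\|u\|$, so $\theta$ depends continuously and strictly monotonically on $\lambda$, with $\theta(u)=0$ and $\theta(a(p))<0<\theta(b(p))$; moreover, for any two points $x_1,x_2$ on the segment, $\measuredangle(x_1,x_2) = |\theta(x_1)-\theta(x_2)|$, since $\theta$ is simply an angular coordinate on this $2$-plane centered at $0$ with the $u$-direction as reference.

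Set $\alpha := \tfrac{1}{2}\arccos\bigl((n-2)/(n+2)\bigr)$. By \cref{t:main}, $\theta(b(p)) - \theta(a(p)) = \measuredangle(a(p),b(p)) \geq 2\alpha$. For $v$: since $\theta(b(p)) + (-\theta(a(p))) \geq 2\alpha$ with both summands positive, at least one of $\theta(b(p))$, $-\theta(a(p))$ is $\geq \alpha$, and the intermediate value theorem delivers $v\in[a(p),b(p)]$ with $|\theta(v)|=\alpha$, i.e., $\measuredangle(u,v)=\alpha$. For $q$: since $\theta(p)\in[\theta(a(p)),\theta(b(p))]$, the sub-intervals $[\theta(a(p)),\theta(p)]$ and $[\theta(p),\theta(b(p))]$ have lengths summing to $\geq 2\alpha$, so at least one has length $\geq \alpha$; IVT then yields $q\in[a(p),b(p)]$ with $|\theta(q)-\theta(p)|=\alpha$, i.e., $\measuredangle(p,q)=\alpha$.

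I do not anticipate a genuine obstacle here: once the orthogonality $\scal{u}{p-u}=0$ is spotted the problem collapses to a one-dimensional intermediate-value argument in the angular coordinate $\theta$, and the only non-trivial ingredient, namely the lower bound $\measuredangle(a(p),b(p))\geq 2\alpha$, is exactly the content of \cref{t:main}.
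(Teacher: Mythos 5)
Your proof is correct and takes essentially the same route as the paper: both invoke \cref{t:main} to get $\measuredangle(a(p),b(p))\geq 2\alpha$, split this angle at $p$ (respectively $u$) so that one of the two sub-angles is at least $\alpha$, and then find $q$ (respectively $v$) by a continuity argument along the segment $[a(p),b(p)]$ --- what the paper calls ``rotating'' towards $a$ or $b$. Your only addition is to make the implicit steps rigorous, namely the angular additivity and the IVT step, via the orthogonality $\scal{u}{p-u}=0$ and the monotone signed coordinate $\theta$, which the paper leaves informal.
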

\begin{proof}
Let $p\in\Delta_n\smallsetminus\{u\}$, and set
$\alpha_n := \arccos((n-2)/(n+2))$.
We have seen earlier (see \cref{sec:ab}) that
there exist two unique points $a = a(p)$ and $b = b(p)$ in $\Delta_n$
such that $\{u,p\}\subseteq [a,b]$ and $\|a-b\|$ is maximal.
Consequently,
\begin{equation}
\measuredangle (a,p) + \measuredangle (p,b) = \measuredangle (a,b)
= \measuredangle(a,u) + \measuredangle(u,b).
\end{equation}
On the other hand,
\cref{t:main} yields $\measuredangle(a,b)\geq \alpha_n$.
Altogether,
\begin{equation}
\measuredangle (a,p) + \measuredangle (p,b) =
\measuredangle(a,u) + \measuredangle(u,b)
\geq \alpha_n = \arccos\Big(\frac{n-2}{n+2}\Big).
\end{equation}
Thus if $\measuredangle (a,p) \geq \alpha_n/2$,
then rotating $p$ towards $a$ yields $q$.
If
$\measuredangle (a,p) < \alpha_n/2$, then
$\measuredangle (p,b) >  \alpha_n/2$ and we rotate $p$ towards $b$
to obtain $q$.
Similarly,
if $\measuredangle (a,u) \geq \alpha_n/2$,
then rotating $u$ towards $a$ yields $v$.
Finally, if
$\measuredangle (a,u) < \alpha_n/2$, then
$\measuredangle (u,b) >  \alpha_n/2$ and we rotate $u$ towards $b$
to obtain $v$.
\end{proof}

By \cref{c:app}, if a participant produces a prior
$p\in\Delta_n\smallsetminus\{u\}$, then the experimenter presents a ball drop
distribution $q$ where $\measuredangle (p,q) = \alpha_n/2$. If a participant
produces the uniform prior $u$, then the experimenter presents a ball drop
distribution $v$ where $\measuredangle (u,v) = \alpha_n/2$ arising from an
arbitrary $p\in \Delta_n\smallsetminus\{u\}$ via  \cref{c:app}.

More psychological research is required to determine the precise measure of
similarity humans use to compare probability distributions. Angular similarity
is only one such possibility. This result also poses an interesting theoretical
development: Does the shrinking minimal angle spread as the dimension tends to
infinity create any difficulties for people to perceive or act upon any
particular set of probability distributions? It may be that thinking about
opposite/dissimilar probability distributions becomes more difficult as the
number of discrete histogram bins approaches infinity if these mathematics
correspond to mechanisms of human inference.

% HIER

\nocite{*}
%%%%%%%%%%%%%%%%%%%%%%%%%%%%%%%%%%%%%%%%%%%%%%%
\bibliographystyle{jnsao}

%\bibliography{jnsao_template}
%%%%%%%%%%%%%%%%%%%%%%%%%%%%%%%%%%%%%%%%%%%%%%%
%%%%%%%%%%%%%%%%%%%%%%%%%%%%%%%%%%%%%%%%%%%%%%%
\end{document}